\def \qed {\hfill \vrule height6pt width 6pt depth 0pt}
\def\textmatrix#1&#2\\#3&#4\\{\bigl({#1 \atop #3}\ {#2 \atop #4}\bigr)}
\def\dispmatrix#1&#2\\#3&#4\\{\left({#1 \atop #3}\ {#2 \atop #4}\right)}
\newcommand{\beg}{\begin{equation}}
	\newcommand{\eeg}{\end{equation}}
\newcommand{\ben}{\begin{eqnarray*}}
	\newcommand{\een}{\end{eqnarray*}}
\newtheorem{thm}{Theorem}[section]
\newtheorem{lem}[thm]{Lemma}
\newtheorem{prop}[thm]{Proposition}
\numberwithin{equation}{section} \theoremstyle{definition}
\newtheorem{rem}[thm]{Remark}
\newtheorem{eg}[thm]{Example}
\newcommand{\HS}{\mathcal H}
\newcommand{\C}{\mathbb{C}}
\newcommand{\D}{\mathbb{D}}
\newcommand{\T}{\mathbb{T}}
\newcommand{\A}{A_r}
\newcommand{\bA}{\mathbb{Q} \mathbb{A}_r}
\newcommand{\ov}{\overline}
\def\textmatrix#1&#2\\#3&#4\\{\bigl({#1 \atop #3}\ {#2 \atop #4}\bigr)}
\def\dispmatrix#1&#2\\#3&#4\\{\left({#1 \atop #3}\ {#2 \atop #4}\right)}
\begin{document}
	\title[The $C_{1, r}$ class and quantum annulus]{Characterizations and models for the $C_{1, r}$ class and quantum annulus}
	
	\author[Pal and Tomar] {SOURAV PAL AND NITIN TOMAR}
	
	\address[Sourav Pal]{Mathematics Department, Indian Institute of Technology Bombay,
		Powai, Mumbai - 400076, India.} \email{sourav@math.iitb.ac.in}
	
	\address[Nitin Tomar]{Mathematics Department, Indian Institute of Technology Bombay, Powai, Mumbai-400076, India.} \email{tnitin@math.iitb.ac.in}		
	
	\keywords{$C_{1,r}$ class, Quantum annulus, $A_r$-contraction, $A_r$-unitary}	
	
	\subjclass[2010]{47A20, 47A25, 47A63}	
	
	\thanks{The first named author is supported by the Seed Grant of IIT Bombay, the CDPA and the `Core Research Grant' with Award No. CRG/2023/005223 of Science and Engineering Research Board (SERB), India. The second named author is supported by the Prime Minister's Research Fellowship (PMRF ID 1300140), Government of India.}	
	
	\begin{abstract}
	For fixed $0<r<1$, let $A_r=\{z \in \mathbb{C} : r<|z|<1\}$ be the annulus with boundary $\partial \overline{A}_r=\mathbb{T} \cup r\mathbb{T}$, where $\mathbb T$ is the unit circle in the complex plane $\mathbb C$. An operator having $\ov{A}_r$ as a spectral set is called an $A_r$-\textit{contraction}. Also, a normal operator with its spectrum lying in the boundary $\partial \overline{A}_r$ is called an \textit{$A_r$-unitary}. The \textit{$C_{1,r}$ class} was introduced by Bello and Yakubovich in the following way:  
\[
C_{1, r}=\{T: T \ \mbox{is invertible and} \ \|T\|, \|rT^{-1}\| \leq 1\}.
\]
McCullough and Pascoe defined the \textit{quantum annulus} $\mathbb Q \mathbb A_r$ by
\[
\mathbb Q\mathbb A_r = \{T \,:\, T \text{ is invertible and } \, \|rT\|, \|rT^{-1}\| \leq 1  \}.
\]
If $\mathcal A_r$ denotes the set of all $A_r$-contractions, then $\mathcal A_r \subsetneq C_{1,r} \subsetneq \mathbb Q \mathbb A_r$. We first find a model for an operator in $C_{1,r}$ and also characterize the operators in $C_{1,r}$ in several different ways. We prove that the classes $C_{1,r}$ and $\mathbb Q\mathbb A_r$ are equivalent. Then, via this equivalence, we obtain analogous model and characterizations for an operator in $\mathbb Q \mathbb A_r$.
		\end{abstract}

	\maketitle

	\section{Introduction}\label{sec_intro}

	\vspace{0.2cm}

\noindent Throughout the paper, all operators are bounded linear operators acting on complex Hilbert spaces. We denote by $\D , \T , r\D, r\T$ the unit disk, the unit circle, the disk with radius $r$ and the circle with radius $r$ respectively with center at the origin in the complex plane $\C$. For a Hilbert space $\HS$, we mean by $\mathcal B(\HS)$ the algebra of operators acting on $\HS$. A \textit{contraction} is an operator whose norm is not greater than $1$. For $0<r<1$, let us consider the following annuli:
\begin{align}
\A & =\{ z \in \C \,:\, r<|z|<1 \} \ , \notag \\
\mathbb A_r &= \{ z\in \C \, : \, r< |z| < {r}^{-1} \}. \label{eqn:new-11}
\end{align}
A Hilbert space operator $T$ is said to be an $\A$-\textit{contraction} if $\overline{A}_r$ is a spectral set for $T$, which is to say that the spectrum $\sigma(T) \subseteq \overline{A}_r$ and for every rational function $f$ with poles off $\overline{A}_r$ von Neumann's inequality holds, i.e.
$
 \displaystyle \|f(T)\| \leq \sup \, |f(z)|,
$ where the supremum is taken over $\ov{A}_r$.
Here $f(T)=p(T)q(T)^{-1}$, whence $f=p/q$ with $p,q \in \C[z]$ and $q$ having no zeros in $\ov{A}_r$. An $\A$-\textit{unitary} is a normal operator having its spectrum in the boundary $\T \cup r \T$ of the annulus $\A$. In the seminal paper \cite{Agler}, Agler proved that every $\A$-contraction dilates to an $\A$-unitary which announces the success of rational dilation on an annulus. Hence, for any $\A$-contraction $T$ acting on a Hilbert space $\HS$, there is a Hilbert space $\mathcal K \supseteq \HS$ and an $\A$-unitary $N$ such that $f(T)=P_{\HS}f(N)|_{\HS}$ for every rational function $f$ with poles off $\ov{A}_r$. This path-breaking work due to Agler motivates numerous mathematicians to study further the functions and operators associated with an annulus which leads to exciting works like \cite{McColloughII, comm lifting, McColloughIII, McColloughIV, DritschelII, DritschelI}. Also, see the references therein. Recently, Bello and Yakubovich \cite{Dmitry} introduced two important classes of operators associated with an annulus, namely the $C_{\alpha}$ and $C_{1,r}$ classes which were defined in the following way:
\begin{align*}
& C_{\alpha} = \{T \, :\, T \text{ is an invertible operator and } \, \alpha(T)=-{T^*}^2T^2+(1+r^2)T^*T-r^2I \, \geq 0  \} \, , \\
& C_{1,r} = \{T \,:\, T \text{ is an invertible operator and } \, \|T\|, \, \|rT^{-1}\| \leq 1  \}.
\end{align*}
We mention two important facts about these classes: first, the operators in $C_{\alpha}, \, C_{1,r}$ classes have their spectrums in $\ov{A}_r$ and second, if $\mathcal A_r$ denotes the set of all $\A$-contractions, then $\mathcal A_r, \, C_{\alpha}, \, C_{1,r}$ form a strictly increasing chain as was proved in \cite{Dmitry}.
\begin{thm}[\cite{Dmitry}, Theorem 1.1]\label{thm_3class}
$\mathcal A_r \subsetneq C_\alpha \subsetneq C_{1, r}$ .
\end{thm}
Also, an explicit model was constructed for an operator in the $C_{\alpha}$ class by Bello and Yakubovich, see Theorem 1.2 in \cite{Dmitry}. Interestingly, McCullough and Pascoe \cite{Pas-McCull} considered the annulus $\mathbb A_r$ as in (\ref{eqn:new-11}) and introduced the \textit{quantum annulus} $\mathbb Q \mathbb A_r$ which consists of invertible operators $T$ such that both $rT$ and $rT^{-1}$ are contractions, i.e.
\[
\mathbb Q \mathbb A_r = \{T\,:\, T \text{ is an invertible operator and } \, \|T\|, \, \|T^{-1}\| \leq r^{-1}  \}.
\]
Evidently, $C_{1,r} \subsetneq \mathbb Q \mathbb A_r$ which stretches the increasing chain of Theorem \ref{thm_3class} one more step. In \cite{Pas-McCull}, McCullough and Pascoe found the following model theorem for an operator in $\mathbb Q \mathbb A_r$.

\begin{thm}[\cite{Pas-McCull}, Theorem 1.1]\label{thm_model_QAr}
An invertible operator $T$ acting on a Hilbert space $\mathcal{H}$ is in $\mathbb{Q} \mathbb{A}_r$ if and only if there exists an invertible operator $J$ acting on a larger Hilbert space $\mathcal{K}$ containing $\mathcal{H}$ such that 
$T^n=P_\mathcal{H}J^n|_\mathcal{H}$ for all $n \in \mathbb{Z}$ and $J$, upto unitary equivalence, takes the following form
\[
J=U\begin{bmatrix}
rI_{\mathcal{K}_{0}} & 0\\
0 & r^{-1}I_{\mathcal{K}_{1}}
\end{bmatrix}
\]
where $U$ is a unitary, $\mathcal{K}=\mathcal{K}_0 \oplus \mathcal{K}_1$ and $P_\HS$ denotes the orthogonal projection of $\mathcal K$ onto $\HS$.
\end{thm}
These wider classes of operators generalizing the $\A$-contractions have been extensively studied in recent past in \cite{Mittal, Tsikalas, TsikalasII} also.

\medskip

In this article, we further analyse the $C_{1,r}$ class and the quantum annulus. We find the following model-cum-characterization theorem for the $C_{1,r}$ class in terms of a pair of $\A$-unitaries. Before we state the theorem, let us mention that any rational function $f$ with poles off $\ov{A}_r$ can be represented as
\begin{equation} \label{eq:1e}
   f(z)=\frac{p(z)}{q_1(z)q_2(z)}, 
\end{equation}
where $p, q_1$ and $q_2$ are polynomials in $\C[z]$ such that the zeros of $q_1$ and $q_2$ lie in $\mathbb{C}\setminus\overline{\mathbb{D}}$ and $r\mathbb{D}$ respectively. 

\begin{thm}\label{Main}

 	Let $T$ be an invertible operator acting on a Hilbert space $\mathcal{H}$. Then $T \in C_{1, r}$ if and only if there is a Hilbert space $\mathcal{K} \supseteq \mathcal H$, an $A_r$-unitary $N$ on $\mathcal K$ and a self adjoint unitary $F $ on $\mathcal K$ such that 
 \[
 f(T)=P_\mathcal H \bigg(p(N)q_1(N)^{-1}q_2(FNF)^{-1}\bigg) \bigg|_\mathcal H
 \]
 for every rational function $f$ with poles off $\overline{A}_r$ $(\mbox{as in} \ (\ref{eq:1e}))$.

 \end{thm}

Note that both $N$ and $FNF^{-1}$ as in the model above are $\A$-unitaries. Next, we obtain the following characterizations for an operator in the $C_{1,r}$ class. We mention that the equivalence of the conditions $(1)$ and $(2)$ of this theorem follows from Proposition 3.2 in \cite{Pas-McCull}. However, we present a different proof for this part in this paper.
 
\begin{thm}\label{thm_C1r_pencil}
Let $T$ be an invertible operator acting on a Hilbert space $\HS$. Then the following are equivalent:
\begin{enumerate}
\item $T \in C_{1, r}$ ;
\item $(1+r^2)I_\HS-T^*T-r^2T^{-1}(T^{-1})^* \geq 0$ ;
\item $(T^*T)^{1\slash 2} \in C_\alpha$ ;
\item $(T^*T)^{1\slash 2}$ is an $A_r$-contraction ;
\item there exist a unitary $U$ and an $\A$-contraction $P$ on $\HS$ such that $T=UP$.
\end{enumerate}
\end{thm}

These results will be proved in Section \ref{sec_C1r}. Though, we have $C_{1,r} \subsetneq \bA$ by definition, actually these two classes are comparable. To see this, let us consider the map
$
\varphi: \mathbb{A}_r \to A_{r^2}$ defined by $ \varphi(z)=rz$,
which is a biholomorphism with $\varphi^{-1}(z)=r^{-1}z$. Now one can easily prove the following lemma that establishes the equivalence of $C_{1,r}$ and $\bA$.

\begin{lem}\label{lem_equiv}
An operator $T \in C_{1, r}$ if and only if $r^{-1\slash 2} T \in \mathbb{Q} \mathbb{A}_{\sqrt{r}} \ $. Also, $T \in \mathbb{Q} \mathbb{A}_r$ if and only if $rT \in C_{1, r^2}$.
\end{lem}
Hence, any result that holds for the $C_{1,r}$ class must have an analogue for the quantum annulus. So, we have the following model theorem and characterizations for $\bA$ that are analogous to Theorems \ref{Main} \& \ref{thm_C1r_pencil} respectively.
\begin{thm}\label{thm_404}
Let $T$ be an invertible operator acting on a Hilbert space $\mathcal{H}$. Then $T \in \mathbb{Q} \mathbb{A}_r$ if and only if there is a Hilbert space $\mathcal{K} \supseteq \mathcal H$, a normal operator $N$ on $\mathcal K$ with $\sigma(N) \subseteq  \partial \overline{\mathbb{A}}_r$ and a self adjoint unitary $F $ on $\mathcal K$ such that 
 \[
 g(T)=P_\mathcal H \bigg(p(N)q_1(N)^{-1}q_2(FNF)^{-1}\bigg) \bigg|_\mathcal H
 \]
 for every rational function $g=p\slash q_1q_2$ with the zeros of $q_1$ and $q_2$ in $\C \setminus r^{-1} \overline{\D}$ and $r\D$ respectively.
\end{thm} 

\begin{thm}\label{thm_QAr_pencil}
Let $T$ be an invertible operator on a Hilbert space $\HS$. Then the following are equivalent:
\begin{enumerate}
\item $T \in \mathbb{Q} \mathbb{A}_{r}$;
\item $(r^{-2}+r^2)I_\HS-T^*T-T^{-1}(T^{-1})^* \geq 0$;
\item $\overline{\mathbb{A}}_r$ is a spectral set for $(T^*T)^{1\slash 2}$;
\item there exist an operator $P$ with $\overline{\mathbb{A}}_r$ as a spectral set and a unitary $U$on $\HS$ such that $T=UP$.
\end{enumerate}
\end{thm} 

Note that the model that we obtain for $\mathbb Q \mathbb A_r$ as in Theorem \ref{thm_404} consists of a pair of normal operators having their spectrums on the boundary of the annulus $\bA$. Since Lemma \ref{lem_equiv} allows us to move back and forth between $C_{1,r}$ and $\mathbb{Q} \mathbb{A}_r$, we do not want to miss the opportunity to achieve an alternative model for the $C_{1,r}$ class which goes parallel with the model for $\bA$ obtained by McCullough and Pascoe (i.e. Theorem \ref{thm_model_QAr}).

\begin{thm}\label{thm_C1r_MP}
An invertible operator $T$  acting on a Hilbert space $\mathcal{H}$ is in $C_{1, r}$ if and only if there is an invertible operator $J$ on a Hilbert space $\mathcal{K} \supseteq \mathcal{H}$ such that $T^n=P_\mathcal{H}J^n|_\mathcal{H}$ for every $n \in \mathbb{Z}$ and  $J$ admits the following form:
\[
J=U\begin{bmatrix}
rI_{\mathcal{K}_0} & 0\\
0 & I_{\mathcal{K}_1}
\end{bmatrix},
\] 
where $U$ is a unitary and $\mathcal{K}=\mathcal{K}_0 \oplus \mathcal{K}_1$.
\end{thm}
 
We prove the results associated with $\bA$ in Section \ref{Sec_quant}. In Section \ref{sec_prep}, we prove a few relevant and preparatory results.

\vspace{0.2cm}

\section{Preparatory results}\label{sec_prep}

\vspace{0.2cm}

\noindent We begin with a famous result due to Ando which states that a pair of commuting contractions $T_1, T_2$ can always be lifted simultaneously to a pair of commuting unitaries $U_1,U_2$.

\begin{thm}[Ando, \cite{Ando}]\label{Ando}

		Given a commuting pair of contractions $(T_1, T_2)$ on a Hilbert space $\mathcal{H},$ there exists a commuting pair of unitaries $(U_1, U_2)$ on a Hilbert space $\mathcal{K}$ containing $\mathcal{H}$ such that 
		\[
		p(T_1, T_2)=P_\mathcal{H}p(U_1, U_2)|_\mathcal{H}
		\]
		for every polynomial $p$ in two variables.

	\end{thm}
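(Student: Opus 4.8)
The plan is to obtain the unitary dilation in two stages: first construct a commuting pair of isometries that dilates $(T_1,T_2)$, and then extend these isometries to commuting unitaries. The reduction that drives everything is the observation that it suffices to produce a Hilbert space $\mathcal K_1 \supseteq \mathcal H$ and commuting isometries $(V_1,V_2)$ on $\mathcal K_1$ for which $\mathcal H$ is co-invariant under each $V_i$ (that is, $V_i^*\mathcal H \subseteq \mathcal H$) with $V_i^*|_{\mathcal H}=T_i^*$; equivalently, each $V_i$ is lower triangular, $V_i=\begin{pmatrix} T_i & 0 \\ * & * \end{pmatrix}$, with respect to $\mathcal K_1 = \mathcal H \oplus (\mathcal K_1 \ominus \mathcal H)$. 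Indeed, for $h,h'\in\mathcal H$ the commutativity of $V_1,V_2$ gives $\langle V_1^m V_2^n h, h'\rangle = \langle h, V_2^{*n}V_1^{*m}h'\rangle = \langle h, T_2^{*n}T_1^{*m}h'\rangle = \langle T_1^m T_2^n h, h'\rangle$, so that $P_{\mathcal H} V_1^m V_2^n|_{\mathcal H} = T_1^m T_2^n$ for all $m,n\geq 0$.

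The crux, and the step on which I would concentrate, is the construction of the commuting isometric dilation. Dilating $T_1$ in isolation is immediate --- its minimal isometric dilation lives on $\mathcal H \oplus \ell^2(\mathbb N_0, \mathcal D_1)$, where $\mathcal D_i = \overline{\mathrm{ran}}\, D_{T_i}$ and $D_{T_i}=(I-T_i^*T_i)^{1/2}$ --- but $T_2$ must be dilated simultaneously so that the two isometries commute, and this is where commutativity of the original pair must enter. The essential algebraic input is the identity $\|D_{T_1}T_2 h\|^2 + \|D_{T_2}h\|^2 = \|D_{T_1}h\|^2 + \|D_{T_2}T_1 h\|^2$, valid for all $h\in\mathcal H$ precisely because $T_1 T_2 = T_2 T_1$. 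This identity says that $(D_{T_1}T_2 h, D_{T_2}h) \mapsto (D_{T_1}h, D_{T_2}T_1 h)$ is a well-defined isometry between two subspaces of $\mathcal D_1 \oplus \mathcal D_2$, which (after enlarging the defect spaces with extra copies, if needed to balance the dimensions of the complementary subspaces) extends to a unitary $W$ on $\mathcal D_1 \oplus \mathcal D_2$. Using $T_1,T_2$, the defect operators, and $W$ as block entries, I would assemble $V_1,V_2$ on $\mathcal K_1 = \mathcal H \oplus \ell^2(\mathbb N_0, \mathcal D_1 \oplus \mathcal D_2)$ and then verify by direct computation that each $V_i$ is an isometry of the required lower-triangular form and that $V_1 V_2 = V_2 V_1$. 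Checking commutativity is the main obstacle, and it is exactly the coupling unitary $W$ --- whose existence hinges on the displayed identity --- that makes it go through.

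Finally, I would extend the commuting isometries to commuting unitaries. The minimal unitary extension of a single isometry is obtained as the direct limit of the system $\mathcal K_1 \xrightarrow{V_1} \mathcal K_1 \xrightarrow{V_1} \cdots$; since $V_2$ commutes with $V_1$, it descends to an isometry on the limit space that commutes with the resulting unitary $U_1$, and performing the same extension for $V_2$ yields commuting unitaries $(U_1,U_2)$ on a space $\mathcal K \supseteq \mathcal K_1$ with $U_i|_{\mathcal K_1}=V_i$. Because $\mathcal K_1$ is then invariant under both $U_i$, for $h\in\mathcal H$ we have $U_1^m U_2^n h = V_1^m V_2^n h$, so $P_{\mathcal H} U_1^m U_2^n|_{\mathcal H} = T_1^m T_2^n$; extending linearly over monomials gives $P_{\mathcal H} p(U_1,U_2)|_{\mathcal H} = p(T_1,T_2)$ for every two-variable polynomial $p$, which is the assertion. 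This last stage is routine; all the difficulty is concentrated in the simultaneous isometric dilation of the previous paragraph.
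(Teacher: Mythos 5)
The paper does not prove this statement at all: it is quoted, with a citation, as Ando's classical 1963 theorem and is used later as a black box. So your proposal can only be measured against Ando's original argument, and in outline you have reproduced exactly its skeleton: (i) reduce to producing a commuting \emph{isometric} dilation with $\mathcal H$ co-invariant, (ii) build the commuting isometries from the defect identity $\|D_{T_1}T_2h\|^2+\|D_{T_2}h\|^2=\|D_{T_1}h\|^2+\|D_{T_2}T_1h\|^2$ and a coupling unitary, (iii) extend commuting isometries to commuting unitaries by a double direct-limit construction. Your steps (i) and (iii), and the identity in (ii), are all correct.

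The gap is that step (ii), which carries the entire weight of the theorem, is deferred to a ``direct computation'' that would in fact fail in the setup you describe. If, as your sketch indicates, each $V_i$ on $\mathcal H\oplus\ell^2(\mathbb N_0,\mathcal D_1\oplus\mathcal D_2)$ embeds its defect into the zeroth $\ell^2$-slot and acts on the tail by a shift (with or without a $W$-twist: any block-diagonal twisting composed with a shift still has range orthogonal to the zeroth slot), then the zeroth-slot component of $V_1V_2h$ is $D_{T_1}T_2h$ and that of $V_2V_1h$ is $D_{T_2}T_1h$, so commutativity forces the \emph{pointwise} identity $\|D_{T_1}T_2h\|=\|D_{T_2}T_1h\|$ for all $h$. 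This is false in general: take $T_1=I$, $T_2=0$, where the left side is $0$ and the right side is $\|h\|$, even though this pair trivially admits commuting unitary dilations. Only the \emph{sum} identity you quote is available, and exploiting it is precisely why Ando's construction is elaborate: the tail blocks are four copies of $\mathcal H$ (not of $\mathcal D_1\oplus\mathcal D_2$ merely padded), the two defects are written into disjoint slots, and the isometries are arranged so that the two defect contributions of a product land together in a common block, where the coupling unitary, inserted blockwise into the definition of one of the isometries, can match them up. In short, $W$ alone does not ``make it go through'' on the space you name; the step you call routine is where the actual proof lives, and as sketched it breaks.
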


We now state and prove a few basic properties of an operator in $C_{1,r}$ class which will be used in the proof of the main theorems.

\begin{lem}\label{||T||_in_Ar}

For every operator $T$ in $C_{1,r}$ class, we have $r \leq  \|T\| \leq 1.$

\end{lem}

\begin{proof}
Since $T$ is in $C_{1,r}$ class, we have that $\sigma(T) \subseteq \ov{A}_r$ and $\|T\|, \|rT^{-1}\| \leq 1$. Let if possible, $\|T\| <r.$ For any $\lambda=se^{i \theta}$ for $s \geq r$ and $\theta \in [0,2\pi],$ we have that $\|T\| <r \leq s= |\lambda|$ and hence, $(I-\lambda^{-1}T)$ is invertible which yields that $\lambda \notin \sigma(T).$ Consequently, we get that $\sigma(T) \subseteq r\mathbb{D}$ which is a contradiction. Hence, $r \leq \|T\| \leq 1.$
\end{proof}

 The converse of Lemma \ref{||T||_in_Ar} does not hold. Indeed, if we choose $r=1/2$ and
 $
 T =\begin{bmatrix}
 0 & 1 \\
 \frac{1}{100} & 0 
 \end{bmatrix},
 $
 then $T$ is invertible and $r \leq \|T\| \leq 1$. However, $T$ does not belong to $C_{1,r}$ as the spectrum $\sigma(T)$ is not contained in $\ov{A}_r$. Even more is true, the converse to Lemma \ref{||T||_in_Ar} does not hold for $A_r$-contractions, i.e. an invertible operator $T$ with $\sigma(T) \subseteq \overline{A}_r$ and $r \leq \|T\| \leq 1$ is not necessarily an $A_r$-contraction. The following example due to G. Misra \cite{Misra} shows this clearly. Before going to the example let us state an interesting result due to Williams \cite{Williams} showing an interplay between spectral set and complete non-normality. It is to mention that an operator $T \in \mathcal{B}(\mathcal{H})$ is said to be \textit{completely non-normal} if there is no nonzero closed subspace $\HS_1$ of $\HS$ that reduces $T$ and $T|_{\HS_1}$ is normal. 

\begin{thm}[Williams, \cite{Williams}] \label{Williams}

	If an operator $T$ on a finite dimensional space is completely non-normal and $\|T\|=1$, then $\overline{\mathbb{D}}$ is a minimal spectral set for $T$, i.e. no proper closed subset of $\overline{\mathbb{D}}$ is a spectral set for $T$.   

\end{thm}

\begin{eg}\label{Example}
	For $0<r<1,$ the matrix 
	$
	T=\begin{bmatrix}
		\sqrt{r} &  1-r\\
		0 & \sqrt{r} 
	\end{bmatrix} 
	$
	is invertible and $\sigma(T)=\{\sqrt{r}\} \subseteq A_r.$ Note that 
	\[
	T^*T=\begin{bmatrix}
		r & \sqrt{r}(1-r)\\
		\sqrt{r}(1-r) & r+(1-r)^2\\ 
	\end{bmatrix} 
	\]
	and thus it follows that  $\sigma(T^*T)=\{1, r^2\}.$ Therefore, 
	$
	\|T\|^2=\|T^*T\|=1.
	$
	Hence, $r\leq \|T\| \leq 1$. It is not difficult to see that $T \in C_{1, r}$. Since $\|T\|=1$ and $T$ is completely non-normal, Theorem \ref{Williams} implies that $\overline{\mathbb{D}}$ is a minimal spectral set for $T.$ Hence $\overline{A}_r$ cannot be a spectral set for $T$. \qed
\end{eg}

\begin{lem}\label{T and rT^-1}
Let $T$ be an invertible operator acting on a Hilbert space $\HS$. Then

\begin{itemize}

\item[(a)] $T$ is an $A_r$-contraction if and only if $rT^{-1}$ is an $A_r$-contraction ;

\item[(b)] $T$ is in $C_{1,r}$ if and only if $rT^{-1}$ is in $C_{1,r}$ .

\end{itemize}	

\end{lem}

\begin{proof}

The part-(b) is obvious and so we prove only part-(a).
	Let $T$ be an $A_r$-contraction. Then by Lemma \ref{||T||_in_Ar}, we have $\sigma(T)\subseteq \overline{A}_r$ and $r \leq \|T\| \leq 1$. So, it follows from the Spectral Mapping Theorem that	
\[
\sigma(rT^{-1})=\{ r\slash \lambda \ :\  \lambda \in \sigma(T) \} \subseteq \overline{A}_r.
\]
Let $f$ be any rational function with poles off $\overline{A}_r.$ Then we define a rational function $g(z)=f(rz^{-1})$, where $z \mapsto rz^{-1}$ is an automorphism of the annulus $A_r$. Evidently, $g$ has its poles off $\overline{A}_r.$ Now, 
	\begin{equation*}
		\begin{split}
			\|f(rT^{-1})\| =\|g(T)\|
			& \leq \sup \{|g(z)| : r \leq |z| \leq 1\} \\
			& = |g(w)| \ \ \ \     \mbox{for some $w \in \mathbb{T} \cup r\mathbb{T}$\ \ \ \ (by the Maximum-modulus principle)}\\ 
			&=|f(rw^{-1})| \\
			& \leq \sup \{|f(z)| : r \leq |z| \leq 1\}. \\
		\end{split}
	\end{equation*}
Therefore, $\overline{A}_r$ is a spectral set for $rT^{-1}$. Again if $\overline{A}_r$ is a spectral set for $S=rT^{-1}$, then by previous part of the proof we have that $rS^{-1}=T$ is also an $A_r$-contraction and the proof is complete.
\end{proof}

Let $T$ be an operator on a Hilbert space $\mathcal H$ and let $\gamma$ be a simple closed curve in $\C$ such that $\sigma(T)$ is contained in the interior of $\gamma$. If $f$ is a holomorphic function on and in the interior of $\gamma$, then $f(T)$ can be defined in the following way:
\begin{equation} \label{eqn:new-021}
	f(T):=\frac{1}{2\pi i}\underset{\gamma}{\int}f(w)(w-T)^{-1}dw.
\end{equation}
It is merely mentioned that the above integral and hence the definition of $f(T)$ is independent of the choice of $\gamma$. Before going to the next proposition we state a classic result whose proof is a routine exercise.

\begin{lem}\label{continuity}
	Let $T \in \mathcal{B}(\mathcal{H})$ and let $\Omega$ be an open set containing $\sigma(T).$ If a sequence $\{f_n\}$ of holomorphic functions on $\Omega$ converges uniformly to a function $f$ on every compact subset of $\Omega$, then $f$ is holomorphic on $\Omega$ and $\{f_n(T)\}$ as in $($\ref{eqn:new-021}$)$ converges to $f(T)$ in operator norm.
\end{lem} 

Every rational function $f$ with poles off $\overline{A}_r$ is analytic in an open neighbourhood containing $\overline{A}_r$ and thus has a unique Laurent series $f(z)=\overset{\infty}{\underset{j=-\infty}{\sum}}f_jz^j$. We show that for $T \in C_{1, r}$ acting on $\mathcal{H},$ the series $\overset{\infty}{\underset{j=-\infty}{\sum}}f_jT^j$ defines an operator on $\mathcal{H}$ and $f(T)=p(T)q(T)^{-1}=\overset{\infty}{\underset{j=-\infty}{\sum}}f_jT^j$, where $f=p/q$ with $q$ having no zeros inside $\overline{A}_r$. 

\begin{prop}\label{Laurent}

	 Given an operator $T \in C_{1, r}$ on a Hilbert space $\mathcal{H}$ and a rational function $f$ with poles off $\overline{A}_r,$ the series $\overset{\infty}{\underset{j=-\infty}{\sum}}f_jT^j$ defines a bounded linear operator on $\mathcal{H}$ and is same as $f(T)$.

\end{prop}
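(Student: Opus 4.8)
The plan is to prove the statement in two stages: first show that the two-sided series $\sum_{j=-\infty}^{\infty}f_jT^j$ converges absolutely in operator norm and hence defines a bounded operator, and then identify its sum with $f(T)$ by approximating $f$ by Laurent polynomials and invoking Lemma \ref{continuity}. To set up the coefficient estimates, I would first note that since $f$ is analytic on an open neighbourhood of $\overline{\mathbb{A}}_r$, its Laurent expansion $f(z)=\sum_{j=-\infty}^{\infty}f_jz^j$ actually converges on some open annulus $\{z:\rho_1<|z|<\rho_2\}$ with $\rho_1<r$ and $\rho_2>1$. Applying the Cauchy--Hadamard formula separately to the analytic part (nonnegative powers) and the principal part (negative powers) then gives $\limsup_{j\to\infty}|f_j|^{1/j}=1/\rho_2<1$ and $\limsup_{j\to\infty}|f_{-j}|^{1/j}=\rho_1<r$.

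Next I would establish convergence. Because $\sigma(T)\subseteq\overline{\mathbb{A}}_r$ excludes $0$, the operator $T$ is invertible, and by Lemma \ref{lem:e01} both $T$ and $rT^{-1}$ are contractions, so $\|T\|\leq 1$ and $\|T^{-1}\|\leq r^{-1}$. Splitting the series accordingly, the nonnegative tail satisfies $\|f_jT^j\|\leq|f_j|$, which is summable since $\limsup|f_j|^{1/j}<1$; the negative tail satisfies $\|f_{-j}T^{-j}\|\leq|f_{-j}|r^{-j}$, and because $\limsup_{j\to\infty}\bigl(|f_{-j}|r^{-j}\bigr)^{1/j}=\rho_1/r<1$ this tail is summable as well. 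Hence the full series converges absolutely in $\mathcal{B}(\mathcal{H})$ and defines a bounded operator, which I will call $S$.

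For the identification $S=f(T)$, I would consider the Laurent polynomials $f_n(z)=\sum_{j=-n}^{n}f_jz^j$. Since the holomorphic functional calculus is an algebra homomorphism sending $z\mapsto T$ and (as $0\notin\sigma(T)$) $z^{-1}\mapsto T^{-1}$, one has $f_n(T)=\sum_{j=-n}^{n}f_jT^j$, which are exactly the symmetric partial sums of $S$; by absolute convergence these converge to $S$. On the other hand, $f_n\to f$ uniformly on compact subsets of the annulus of convergence, which contains an open set $\Omega$ enclosing the compact set $\sigma(T)$, so Lemma \ref{continuity} gives $f_n(T)\to f(T)$ in $\mathcal{B}(\mathcal{H})$. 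Uniqueness of limits yields $S=f(T)$, and writing $f=p/q$ with $q$ nonvanishing on $\overline{\mathbb{A}}_r\supseteq\sigma(T)$ shows that this functional-calculus value coincides with $p(T)q(T)^{-1}$, as required.

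I expect the main obstacle to be the convergence of the negative-power tail: this is precisely where the annular (two-sided) nature of the problem enters, and it succeeds only because Lemma \ref{lem:e01} supplies the bound $\|T^{-1}\|\leq r^{-1}$ to match the coefficient decay $\limsup|f_{-j}|^{1/j}=\rho_1<r$. Once that summability is secured, the passage from the partial sums to $f(T)$ via Lemma \ref{continuity} is routine.
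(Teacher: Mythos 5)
Your proof is correct and follows essentially the same route as the paper: approximate $f$ by its symmetric Laurent partial sums $f_n(z)=\sum_{j=-n}^{n}f_jz^j$ and apply Lemma \ref{continuity} to conclude $f_n(T)\to f(T)$. The only difference is that you additionally verify absolute convergence of $\sum_{j=-\infty}^{\infty}f_jT^j$ in operator norm via Cauchy--Hadamard estimates together with the bounds $\|T\|\leq 1$ and $\|T^{-1}\|\leq r^{-1}$ from Lemma \ref{lem:e01}, a detail the paper's two-line proof leaves implicit.
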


\begin{proof}
Since $T \in C_{1, r}$, it follows from the Spectral Mapping Theorem that $\sigma(T) \subseteq \overline{A}_r$. Thus $f(T)$ is well-defined. The sequence $\tilde{f}_n(z)=\overset{n}{\underset{j=-n}{\sum}}f_jz^j$ converges uniformly to $f(z)$ on $\overline{A}_r$ and we have by Lemma \ref{continuity} that $\tilde{f}_n(T)$ converges to $f(T)$ in operator norm topology. Thus, $f(T)=\overset{\infty}{\underset{j=-\infty}{\sum}}f_jT^j.$
\end{proof}

In order to prove that an $A_r$-contraction $T \in \mathcal B(\mathcal H)$ admits an $A_r$-unitary dilation $N$ on $\mathcal K \supseteq \mathcal H$, one needs to show that $f(T)=P_\mathcal{H}f(N)|_\mathcal H$ for every rational function $f$ with poles off $\overline{A}_r$. The next lemma shows that instead of all rational functions $f$ it suffices to consider only the integral powers of $z$, i.e. the functions of the type $z^j$ for $j \in \mathbb Z$.

\begin{lem}\label{lem_int_power}

	For operators $T$ and $N$ in $C_{1, r}$ acting on Hilbert spaces $\HS$ and $\mathcal{K}$ respectively with $\mathcal{K}\supseteq \mathcal{H}$, the following are equivalent:
	\begin{enumerate}
		\item $f(T)=P_\mathcal{H}f(N)|_\mathcal H$ \ for every rational function $f$ with poles off $\overline{A}_r$ ;
		
		\item $T^j=P_\mathcal{H}N^j|_\mathcal H$ \ for every $j \in \mathbb{Z}.$
	\end{enumerate}

\end{lem}

\begin{proof}
$(1) \implies (2)$ is obvious. We prove $(2) \implies (1)$. Let $f$ be a rational function with poles off $\overline{A}_r$ and let $f(z)=\overset{\infty}{\underset{j=-\infty}{\sum}}f_jz^j$ be its Laurent series. We have by Proposition $\ref{Laurent}$ that $f(T)=\overset{\infty}{\underset{j=-\infty}{\sum}}f_jT^j$ and $f(N)=\overset{\infty}{\underset{j=-\infty}{\sum}}f_jN^j$. For any $h\in \mathcal H$, it follows that
\[
	f(T)h
=\overset{\infty}{\underset{j=-\infty}{\sum}}f_jT^jh
=\overset{\infty}{\underset{j=-\infty}{\sum}}f_j(P_\mathcal{H}N^jh)
=P_\mathcal{H}\overset{\infty}{\underset{j=-\infty}{\sum}}f_jN^jh
=P_\mathcal{H}f(N)h.
\]
\end{proof}

Next, we show that the three classes $\mathcal A_r, C_\alpha, C_{1,r}$ as in Theorem \ref{thm_3class} agree under subnormality condition. To do so, it suffices to show that every subnormal $C_{1, r}$ operator is an $\A$-contraction. First we state an elementary result whose proof is a routine exercise.

\begin{lem}\label{lem_normal_Ar}
Let $N$ be a normal operator with $\sigma(N) \subseteq \overline{A}_r$. Then $N$ is an $\A$-contraction.
\end{lem}

The next two results are also important in the context of this article.

\begin{prop} \label{prop:new-021}
A subnormal operator $T \in C_{1, r}$ if and only if $T$ is an $\A$-contraction.
\end{prop}

\begin{proof}
Let $T \in C_{1, r}$ be subnormal. We have that $\sigma(T) \subseteq \overline{A}_r$. Let $N$ be the minimal normal extension of $T$. It follows from Theorem 2.11 in Chapter 2 of \cite{Conway} that $\sigma(N) \subseteq \sigma(T)$  and so, $\sigma(N) \subseteq \overline{A}_r$. Note that $N$ is invertible and since $T=N|_\HS$, we have that $T^{-1}=N^{-1}|_\HS$. Therefore, $T^m=N^m|_\HS$ for every $m \in \mathbb Z$. It follows from Lemma \ref{lem_int_power} that $f(T)=f(N)|_\HS$ for every rational function $f$ with poles outside $\overline{A}_r$. 
 By Lemma \ref{lem_normal_Ar}, $N$ is an $\A$-contraction and so, $\|f(T)\| \leq \|f(N)\| \leq \sup \{|f(z)| : z \in \overline{A}_r\}$ for every rational function $f$ with poles off $\overline{A}_r$. Thus, $T$ is an $\A$-contraction. The converse follows from Theorem \ref{thm_3class}.
\end{proof}

\begin{prop} \label{prop:new-022}
A subnormal operator $T \in \mathbb{Q} \mathbb{A}_r$ if and only if $\overline{\mathbb{A}}_r$ is a spectral set for $T$.
\end{prop}

\begin{proof}
Let $T \in \mathbb{Q}\mathbb{A}_r$ be subnormal. We have by Lemma \ref{lem_equiv} that $rT \in C_{1, r^2}$ is also subnormal. It follows from Proposition \ref{prop:new-021} that $rT$ is an $A_{r^2}$-contraction. Since $\varphi: \mathbb{A}_r \to A_{r^2}, \varphi(z)=rz$ is a biholomorphism, we have that $\varphi^{-1}(rT)=T$ has $\varphi^{-1}(\overline{A}_{r^2})=\overline{\mathbb{A}}_r$ as a spectral set. The converse is trivial.
\end{proof}

However, an operator in $\mathbb Q \mathbb A_r$ may not always have $\ov{\mathbb A}_r$ as a spectral set. Actually, the class of operators having $\ov{\mathbb A}_r$ as a spectral set is contained in $\mathbb Q \mathbb A_r$ and it follows trivially from the von Neumann's inequality. The following example shows that the containment is strict.

\begin{eg}\label{ExampleII}
	For $0<r<1$, consider the matrix  
	$
	T=\begin{bmatrix}
		1 &  -r+r^{-1}\\
		0 & 1 
	\end{bmatrix} 
	$.
It is not difficult to see that $T$ is invertible and $\|T\|=\|T^{-1}\| =r^{-1}$. Therefore, $T \in \mathbb{Q} \mathbb{A}_r$. Since $\|rT\|=1$ and $rT$ is completely non-normal, Theorem \ref{Williams} implies that $\overline{\mathbb{D}}$ is a minimal spectral set for $rT$. Consequently, $r^{-1}\overline{\mathbb{D}}$ is a minimal spectral set for $T$. Hence, $\overline{\mathbb A}_r$ cannot be a spectral set for $T$. \qed
\end{eg}

\begin{lem}\label{II_Laurent}

 Let $T \in C_{1, r}$. If $f$ as in $($\ref{eq:1e}$)$ is any rational function with poles off $\overline{A}_r$, then 
    \[
    f(z)=\bigg(\overset{\infty}{\underset{n=0}{\sum}}a_{n}z^n\bigg) \bigg(\overset{\infty}{\underset{n=0}{\sum}}b_{n}z^{-n}\bigg) \; \text{ and } \; f(T)=\bigg(\overset{\infty}{\underset{n=0}{\sum}}a_{n}T^n\bigg) \bigg(\overset{\infty}{\underset{n=0}{\sum}}b_{n}T^{-n}\bigg), 
    \]
    for some scalar coefficients $a_n, b_n$.

\end{lem}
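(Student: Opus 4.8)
The plan is to exploit the partial-fraction / factorization structure of $f=p/(q_1q_2)$. Since the zeroes of $q_1$ lie in $\C \setminus \ov{\D}$ and the zeroes of $q_2$ lie in $r\D$, the two factors $1/q_1$ and $1/q_2$ are analytic in complementary regions that together cover an annular neighbourhood of $\ov{\A}$. First I would write $f = f_+ \cdot f_-$ (or a sum that regroups into a product of two series) where the factor built from $q_1$ is analytic on a disk containing $\ov{\D}$ and hence expands as a power series $\sum_{n\ge 0} a_n z^n$ convergent on a neighbourhood of $\ov{\D}$, while the factor built from $q_2$ is analytic outside $r\D$ (including at $\infty$) and hence expands as a series $\sum_{n \ge 0} b_n z^{-n}$ in $z^{-1}$ convergent for $|z| > r'$ with $r' < r$. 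The key analytic input is that each reciprocal $1/q_1(z)$ has no poles in $\ov{\D}$ and each $1/q_2(z)$ has no poles outside $r\D$, so the geometric-type expansions have radii of convergence strictly containing the relevant circles.

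The main steps, in order: (1) Use the factorization $f(z) = p(z) \cdot q_1(z)^{-1} \cdot q_2(z)^{-1}$ and absorb the polynomial $p$ into whichever factor is convenient, so that $f(z) = g_+(z)\, g_-(z)$ with $g_+$ analytic on $|z| < 1+\epsilon$ and $g_-$ analytic on $|z| > r-\epsilon$ for some $\epsilon>0$; this gives the two convergent series $g_+(z)=\sum_{n\ge0}a_nz^n$ and $g_-(z)=\sum_{n\ge0}b_nz^{-n}$, establishing the scalar identity. (2) Transfer to the operator level: by Lemma \ref{||T||_in_Ar} we have $r \le \|T\| \le 1$, and by Lemma \ref{lem:e01} both $T$ and $rT^{-1}$ are contractions, so $\|T^n\| \le 1$ and $\|T^{-n}\| = r^{-n}\|(rT^{-1})^n\| \le r^{-n}$. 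Combined with the convergence radii from step (1) — $\limsup |a_n|^{1/n} < 1$ and $\limsup |b_n|^{1/n}\cdot r^{-1} < 1$ — these norm bounds guarantee that $\sum_{n\ge0} a_n T^n$ and $\sum_{n\ge0} b_n T^{-n}$ converge absolutely in $\mathcal B(\mathcal H)$. (3) Identify the product of these two operator series with $f(T)$. Here I would invoke Proposition \ref{Laurent}, which already tells us $f(T) = \sum_{j=-\infty}^{\infty} f_j T^j$ via the Laurent coefficients; multiplying the two operator series and collecting powers of $T$ must reproduce exactly the Laurent coefficients $f_j = \sum_{n} a_{n} b_{n-j}$ (the Cauchy product), matching the scalar identity term by term.

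The step I expect to be the main obstacle is justifying the rearrangement in step (3): multiplying the doubly-indexed operator series $(\sum a_n T^n)(\sum b_m T^{-m})$ and regrouping into powers $T^j$ requires an absolute-convergence argument so that the Cauchy product equals $\sum_j (\sum_n a_n b_{n-j}) T^j$, which is $\sum_j f_j T^j = f(T)$. Because $T$ need not be normal, one cannot appeal to a functional calculus for the product directly; instead the bounds $\|T^n\|\le 1$ and $\|T^{-n}\|\le r^{-n}$ from step (2), together with the geometric decay of $|a_n|$ and $|b_n| r^{-n}$, make the double sum $\sum_{n,m} |a_n|\,\|T^n\|\,|b_m|\,\|T^{-m}\|$ finite, which licenses the interchange. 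Once absolute convergence is in hand the collecting of terms is a routine Cauchy-product computation matching the scalar Laurent coefficients, and the equality $f(T)=\big(\sum_{n\ge0}a_nT^n\big)\big(\sum_{n\ge0}b_nT^{-n}\big)$ follows.
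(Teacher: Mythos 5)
Your proposal is correct, and its scalar half is in substance what the paper does: the paper simply writes out the geometric series $\frac{1}{z-\alpha_j}=-\sum_{n\geq 0} z^n/\alpha_j^{n+1}$ for each zero $\alpha_j$ of $q_1$ and $\frac{1}{z-\beta_i}=\sum_{n\geq 0}\beta_i^n z^{-(n+1)}$ for each zero $\beta_i$ of $q_2$, and multiplies them out, which is exactly your ``analytic on $|z|<1+\epsilon$'' / ``analytic on $|z|>r-\epsilon$'' factorization made explicit. Where you genuinely diverge is the operator-level identification. The paper makes a single appeal to Lemma \ref{continuity}: the partial products are Laurent polynomials converging locally uniformly to $f$ on an open neighbourhood of $\ov{\mathbb A}_r\supseteq\sigma(T)$, so their values at $T$ converge to $f(T)$, and the product formula drops out at once. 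You instead prove norm convergence by hand --- $\|T^n\|\leq 1$ and $\|T^{-n}\|=r^{-n}\|(rT^{-1})^n\|\leq r^{-n}$ from Lemma \ref{lem:e01}, combined with the geometric decay of $|a_n|$ and of $|b_n|r^{-n}$ --- and then identify the product with $f(T)$ by rearranging the absolutely convergent double series $\sum_{n,m}a_nb_mT^{n-m}$ into powers of $T$ and matching against Proposition \ref{Laurent} via uniqueness of Laurent coefficients. Both routes are valid: the paper's is shorter because the functional-calculus continuity lemma absorbs all the convergence bookkeeping (the paper's ``evidently'' hides precisely the estimates you carry out), while yours makes the operator-norm convergence of each factor explicit and quantitative, at the cost of the Cauchy-product detour through Proposition \ref{Laurent}.

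One correction of a side remark: your claim that ``because $T$ need not be normal, one cannot appeal to a functional calculus for the product directly'' is a misconception. The Riesz--Dunford holomorphic functional calculus applies to an arbitrary bounded operator whose spectrum lies in the domain of analyticity, and it is multiplicative; this is exactly the calculus underlying Lemma \ref{continuity}. Had you used it, you could write $f(T)=g_+(T)\,g_-(T)$ directly and compute $g_+(T)=\sum_{n\geq 0}a_nT^n$ and $g_-(T)=\sum_{n\geq 0}b_nT^{-n}$ by applying Lemma \ref{continuity} to each factor separately, which eliminates the rearrangement step you identified as the main obstacle.
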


\begin{proof}
For any rational function $f$ with poles off $\overline{A}_r$ we have from (\ref{eq:1e}) that
\[
f(z)=\frac{p(z)}{q_1(z)q_2(z)},
\]
where $p, q_1$ and $q_2$ are polynomials with zeros of $q_1$ in $\mathbb{C}\setminus\overline{\mathbb{D}}$ and zeros of $q_2$ in $r\mathbb{D}.$ Suppose $\alpha_1, \dots , \alpha_k \in \mathbb C \setminus \overline{\mathbb D}$ are the zeros of $q_1$ and $\beta_1 ,\dots , \beta_l \in r \mathbb D$ are the zeros of $q_2$. Then
\[ 
q_1(z)  =\alpha (z-\alpha_1)(z-\alpha_2)\dotsc (z-\alpha_k) \quad \text{and} \quad
q_2(z)  =\beta(z-\beta_1)(z-\beta_2)\dotsc (z-\beta_l),
\]
for some $\alpha, \beta \in \mathbb C$. Now for each $\alpha_j,\beta_i$ and for any $z \in \overline{A}_r$, we have 
\[
\frac{1}{z-\alpha_j}
= \frac{-1}{\alpha_j(1-z\slash \alpha_j)}
=-\overset{\infty}{\underset{n=0}{\sum}}\frac{z^n}{\alpha_j^{n+1}} 
\quad \text{and} 
\quad
\frac{1}{z-\beta_i}
=\frac{1}{z(1-\beta_i \slash z)}
=\overset{\infty}{\underset{n=0}{\sum}}\beta_i^nz^{-(n+1)},
\]
where both the series converge uniformly on $\overline{A}_r.$ Thus, for any $z\in \overline{A}_r$ we have that 
\[
\frac{1}{q_1(z)}=\overset{\infty}{\underset{n=0}{\sum}}q_{n1}z^n \quad \mbox{and} \quad \frac{1}{q_2(z)}=\overset{\infty}{\underset{n=0}{\sum}}\frac{q_{n2}}{z^n}
\]
for some scalar coefficients $q_{n1}, q_{n2}$. Consequently,
\[
f(z)=p(z)\bigg(\overset{\infty}{\underset{n=0}{\sum}}q_{n1}z^n\bigg) \bigg(\overset{\infty}{\underset{n=0}{\sum}}q_{n2}z^{-n}\bigg).
\]
Evidently it follows from Lemma \ref{continuity} that
\[
f(T)=p(T)\bigg(\overset{\infty}{\underset{n=0}{\sum}}q_{n1}T^n\bigg) \bigg(\overset{\infty}{\underset{n=0}{\sum}}q_{n2}T^{-n}\bigg)
\]
and the proof is complete.
\end{proof}

We conclude this Section by recalling from the literature a useful result on joint spectrum.

\begin{thm}[\cite{Taylor}, Theorem 4.9]\label{Taylor}
	If $\underline{T}=(T_1, \dotsc, T_n)$ is a tuple of commuting operators on a Hilbert space $X$ and if $\sigma_T(\underline{T})=K_1 \cup K_2$, where $K_1$ and $K_2$ are disjoint compact sets in $\mathbb{C}^n,$ then there are closed linear subspaces $X_1$ and $X_2$ of $X$ such that
	\begin{enumerate}
		\item $X=X_1\oplus X_2;$
		\item $X_1, X_2$ are invariant under any operator which commutes with each $T_k;$
		\item $\sigma_T(\underline{T}|_{X_1})=K_1$ and $\sigma_T(\underline{T}|_{X_2})=K_2$, where $\underline{T}|_{X_i}=(T_1|_{X_i}, \dotsc, T_n|_{X_i})$ for $i=1,2.$ 
	\end{enumerate}
\end{thm}

\vspace{0.2cm}

\section{Characterizations and operator model for the $C_{1, r}$ class}\label{sec_C1r}

\vspace{0.2cm}

\noindent While investigating the success or failure of rational dilation on the closure of a domain $\Omega$, a primary step towards the endeavour is to study the normal operators having their spectrum in the boundary $\partial \ov{\Omega}$. Such operators constitute an analogue of unitaries, which are normal operators associated with the boundary of the unit disk. For an annulus $\A$, they are $\A$-unitaries. In this Section, we first characterize an $\A$-unitary as a direct sum $U_1 \oplus rU_2$ for a pair of unitaries $U_1,U_2$. Using this characterization, we frame a model (see Theorem \ref{Main}) for an operator in $C_{1, r}$ class and the model consists of a pair of $\A$-unitaries. For the sake of brevity,	we fix the following notations for a contraction $T \in \mathcal{B}(\HS)$:
	\[
		T(n)=T^n \ \  (n \geq 1), \quad T(0)=I_\HS, \quad  T(n)=T^{*|n|} \ \ (n \leq -1). 
	\]
Evidently, $\|T\| \leq 1$ if and only if $I-T^*T \geq 0$. Let $D_T=(I-T^*T)^{1\slash 2}$ be the unique positive square root of a contraction $T$.

\begin{thm}\label{Ar_unitary}

	An operator $T \in \mathcal{B}(\mathcal{H})$ is an $A_r$-unitary if and only if $\mathcal{H}$ decomposes into an orthogonal sum $\mathcal{H}=\mathcal{H}_1\oplus \mathcal{H}_2$ such that $\mathcal{H}_1, \mathcal{H}_2$ reduce $T$ and $T_1= T|_{\mathcal{H}_1}, T_2=rT^{-1}|_{\mathcal{H}_2}$ are unitaries. This decomposition is uniquely determined. Indeed, we have that
	\begin{equation*}
		\mathcal{H}_1=\{h \in \mathcal{H} \ : \ \|T^nh\|=\|h\|=\|T^{*n}h\|, \;\;  n=1,2,\dotsc \}
	\end{equation*}
	and 
	\begin{equation*}
		\mathcal{H}_2=\{h \in \mathcal{H} \ : \ \|T^nh\|=r^n\|h\|=\|T^{*n}h\|, \;\; \ n=-1,-2,\dotsc \}.
	\end{equation*}
	The space $\mathcal{H}_1$ or $\mathcal{H}_2$ may coincide with the trivial space $\{0\}$. With respect to the decomposition $\mathcal{H}=\mathcal{H}_1\oplus \mathcal{H}_2$, $T$ has the following block-matrix form: 
	\[
	T=	\begin{bmatrix}
		T_1 &  0     \\
		0   &  rT_2^{-1}  \\ 		
	\end{bmatrix}.
	\]
	
\end{thm}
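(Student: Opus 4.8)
The theorem asserts that an $\A$-unitary is exactly an orthogonal direct sum $U_1 \oplus rU_2$ of a unitary and $r$ times a unitary. The natural strategy is to use the spectral decomposition of the normal operator $T$ together with the hypothesis $\sigma(T) \subseteq \partial \ov{\mathbb A}_r = \mathbb T \cup r\mathbb T$. Since $\mathbb T$ and $r\mathbb T$ are disjoint compact subsets of $\C$, the spectrum splits as $\sigma(T) = (\sigma(T) \cap \mathbb T) \sqcup (\sigma(T) \cap r\mathbb T)$, and the Riesz functional calculus (or, equivalently, the spectral projection $E(\mathbb T)$ of the spectral measure $E$ associated to the normal operator $T$) yields a reducing orthogonal decomposition $\mathcal H = \mathcal H_1 \oplus \mathcal H_2$. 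Here $\mathcal H_1 = \operatorname{ran} E(\mathbb T)$ and $\mathcal H_2 = \operatorname{ran} E(r\mathbb T)$, and both subspaces reduce $T$ because spectral projections of a normal operator commute with $T$ and $T^*$.

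First I would verify that $T_1 = T|\mathcal H_1$ is a unitary: it is normal with $\sigma(T_1) \subseteq \mathbb T$, and a normal operator whose spectrum lies on the unit circle satisfies $T_1^*T_1 = T_1T_1^* = I$ by the spectral theorem (since $|z|^2 = 1$ on $\mathbb T$). Similarly, on $\mathcal H_2$ the operator $T_2 := rT^{-1}|\mathcal H_2$ is normal with spectrum $\{r/\lambda : \lambda \in \sigma(T)\cap r\mathbb T\} \subseteq \mathbb T$, hence unitary; note $T$ restricted to $\mathcal H_2$ is invertible because $0 \notin r\mathbb T$. This directly gives the block form $T = \operatorname{diag}(T_1, rT_2^{-1})$, since $T|\mathcal H_2 = r T_2^{-1}$ by the definition of $T_2$. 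The converse direction is routine: if $T = U_1 \oplus rU_2$ with $U_1, U_2$ unitaries, then $T$ is normal and its spectrum lies in $\mathbb T \cup r\mathbb T$, so $T$ is an $\A$-unitary.

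The step requiring genuine care is the identification of $\mathcal H_1$ and $\mathcal H_2$ with the explicitly described sets and the uniqueness claim. For the characterization of $\mathcal H_1$, I would argue both inclusions. If $h \in \mathcal H_1$, then $T_1$ is unitary so $\|T^n h\| = \|h\| = \|T^{*n} h\|$ for all $n \geq 1$. Conversely, if $h$ satisfies these norm equalities, I would use the spectral measure of the normal operator $T$: writing $\|T^n h\|^2 = \int |\lambda|^{2n}\, d\langle E(\lambda)h, h\rangle$ and comparing with $\|h\|^2 = \int d\langle E(\lambda)h, h\rangle$, the equality $\|T^n h\| = \|h\|$ forces the measure $d\langle E(\lambda)h,h\rangle$ to be supported where $|\lambda| = 1$, i.e. $h \in \operatorname{ran}E(\mathbb T) = \mathcal H_1$. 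The description of $\mathcal H_2$ is analogous, using $\|T^n h\|^2 = \int |\lambda|^{2n}\,d\langle E(\lambda)h,h\rangle$ with negative $n$ (valid since $T$ is invertible on $\mathcal H_2$) and the condition $\|T^n h\| = r^n\|h\|$ to pin the support to $r\mathbb T$.

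I expect the main obstacle to be handling the converse inclusions cleanly with the spectral-integral estimates — in particular, justifying that a probability-type measure $\mu = d\langle E(\lambda)h,h\rangle / \|h\|^2$ on $\mathbb T \cup r\mathbb T$ satisfying $\int |\lambda|^{2n}\,d\mu = 1$ (resp. $= r^{2n}$) for all relevant $n$ must be concentrated on $\mathbb T$ (resp. $r\mathbb T$). Since the support is already confined to the two circles $|\lambda| \in \{r, 1\}$, this reduces to a two-point moment problem for the scalar $|\lambda|^2 \in \{r^2, 1\}$, which is elementary. Uniqueness of the decomposition then follows immediately, as the sets defining $\mathcal H_1$ and $\mathcal H_2$ are determined intrinsically by $T$ through these norm conditions.
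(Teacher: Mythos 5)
Your proof is correct, but it runs in the opposite direction from the paper's. You start from the spectral theorem: since $\sigma(T)\subseteq \mathbb T\cup r\mathbb T$ splits into two disjoint compact pieces, the spectral projections $E(\mathbb T)$, $E(r\mathbb T)$ give the reducing decomposition at once, the restrictions are unitaries because a normal operator with spectrum in the unit circle satisfies $T^*T=\int|\lambda|^2\,dE=I$, and you then identify $\mathcal H_1,\mathcal H_2$ with the norm-condition sets by a two-point moment argument with the scalar measure $\langle E(\cdot)h,h\rangle$ (indeed $n=1$ already suffices, and for normal $T$ the adjoint conditions are automatic). The paper instead \emph{defines} $\mathcal H_1$ and $\mathcal H_2$ by the norm conditions, as intersections of kernels of the defect operators $D_{T(n)}$ and $D_{(rT^{-1})(n)}$ (using that $T$ and $rT^{-1}$ are contractions), proves by contraction-theoretic arguments that these subspaces reduce $T$, that the restrictions are unitaries, and that $\mathcal H_1\perp\mathcal H_2$ via the identity $\langle h_1,h_2\rangle=r^2\langle h_1,h_2\rangle$; only at the last step, to show $\mathcal H_3=\mathcal H\ominus(\mathcal H_1\oplus\mathcal H_2)$ is trivial, does it invoke a spectral-splitting result (Browder's theorem on the joint spectrum) to break $\mathcal H_3$ into pieces with spectrum in $\mathbb T$ and $r\mathbb T$ and derive a contradiction. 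Your route is more economical: it needs only the spectral measure of a single normal operator, gets orthogonality for free, and makes the uniqueness claim transparent since any competing decomposition's summands embed into the intrinsically defined sets. The paper's route is heavier (it imports Browder and the defect-kernel machinery) but mirrors the Sz.-Nagy--Foias canonical-decomposition style, building the maximal unitary parts directly from norm conditions without measure-theoretic arguments. One small point to tighten in your write-up: for uniqueness, state explicitly that if $\mathcal H=\mathcal K_1\oplus\mathcal K_2$ is any decomposition with the stated properties, then $\mathcal K_i\subseteq\mathcal H_i$ by the norm conditions, and complementarity forces equality.
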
	

\begin{proof}
	Since $T$ is an $A_r$-unitary, we have by Theorem \ref{thm_3class} that $T$ is in $C_{1,r}$ and thus $T$ and $rT^{-1}$ are contractions. Hence, $T(n)$ and $(rT^{-1})(n)$ are contractions for every integer $n$. For each fixed $n \in \mathbb{Z}$, it is evident that 
	\[
	\text{Ker}\,D_{T(n)} =\{h\in \mathcal H \,:\, \|T(n)h\|=\|h\|  \} \quad \text{and} \quad \text{Ker}\,D_{(rT^{-1})(n)} =\{h\in \mathcal H \,:\, \|rT^{-1}(n)h\|=\|h\|  \}.
	\]
	Therefore, we have
	\[
		\mathcal{H}_1=\underset{n \in \mathbb{Z} \setminus \{0\}}{\bigcap}\mbox{Ker}\ D_{T(n)} \quad \text{and} \quad\mathcal{H}_2=\underset{n \in \mathbb{Z} \setminus \{0\}}{\bigcap}\mbox{Ker}\ D_{(rT^{-1})(n)}.
		\]
	It is obvious that $\mathcal{H}_1$ and $\mathcal{H}_2$ are closed linear subspaces of $\mathcal{H}$. For any $h \in \mathcal{H}_1$, we have 
	\begin{align*}
		& \|T^nTh\|=\|T^{n+1}h\|=\|h\|=\|Th\| \ \ (n=0,1,2, \dotsc), \\
		\|T^{*n}Th\| &=\|T^{*n-1}T^*Th\|=\|T^{*n-1}h\|=\|h\|=\|Th\| \ \ (n=1,2, \dotsc),
	\end{align*}
	which follow from the fact that for a contraction $T,$ $\|Th\|=\|h\|$ if and only if $T^*Th=h$. Hence, $Th \in \mathcal{H}_1$. Similarly, one can show that $T^*h \in \mathcal{H}_1$. Thus $\mathcal{H}_1$ reduces $T$. A similar argument implies that $\mathcal{H}_2$ reduces $T$. If we set $T_1=T|_{\mathcal{H}_1}$ and $T_2=rT^{-1}|_{\mathcal{H}_2},$ then it follows from the definition of $\HS_1$ and $\HS_2$ that $T_1$ and $T_2$ are unitaries on $\HS_1$ and $\HS_2$ respectively. Consequently, we have that 
	\begin{equation*}
		\langle h_1, h_2\rangle=\langle T^{*}Th_1, (rT^{-1})(rT^{-1})^*h_2\rangle=\langle T^{-1}T^{-1*}T^*Th_1, r^2h_2 \rangle=r^2 \langle h_1,h_2\rangle  
	\end{equation*}
for $h_1 \in \HS_1, h_2 \in \HS_2$ and so, $\langle h_1, h_2 \rangle =0$ as $0<r<1$. Hence, $\mathcal{H}_1$ and $\mathcal{H}_2$ are orthogonal. Consider the subspace $\mathcal{H}_3=\mathcal{H} \ominus \big(\mathcal{H}_1 \oplus \mathcal{H}_2\big)$ which reduces $T$ and thus, $T_3=T|_{\mathcal{H}_3}$ is normal. Since $\sigma(T_3) \subseteq \sigma(T) \subseteq \mathbb{T} \cup r\mathbb{T}$, we have that $\sigma(T_3)=K_1 \cup K_2,$ where $K_1=\sigma(T_3)\cap \mathbb{T}$ and $K_2=\sigma(T_3) \cap r\mathbb{T}$. Since $T_3$ is normal, we have by Theorem \ref{Taylor} that there are closed subspaces $\mathcal{H}_{3}'$ and $\mathcal{H}_3''$ of $\HS_3$ reducing $T_3$ such that
	\[
		\mathcal{H}_{3}=\mathcal{H}_{3}' \oplus \mathcal{H}_{3}'', \quad \sigma(T|_{\mathcal{H}_3'})=K_1 \subseteq \mathbb{T} \quad \text{and} \quad \sigma(T|_{\mathcal{H}_3''})=K_2 \subseteq r\mathbb{T}.
	\]
	It shows that $T$ and $rT^{-1}$ are unitaries on $\mathcal{H}_3'$ and $\mathcal{H}_3''$ respectively. Thus, $\mathcal{H}_3'\subseteq \mathcal{H}_1$ and $\mathcal{H}_3'' \subseteq \mathcal{H}_2$ implying that 
		$\mathcal{H}_3=\mathcal{H}_3' \oplus \mathcal{H}_3'' \subseteq \mathcal{H}_1 \oplus \mathcal{H}_2$. Consequently, $\mathcal{H}_3=\{0\}$ and so, $\mathcal{H}=\mathcal{H}_1 \oplus \mathcal{H}_2$. Also, it is clear from the construction that $\HS_1$ and $\HS_2$ are the maximal closed reducing subspaces of $\HS$ on which $T$ and $rT^{-1}$ act as unitaries respectively. Hence, $\HS_1, \HS_2$ are uniquely determined and the proof is complete.
\end{proof}

Now we are in a position to give a proof to Theorem \ref{Main}, one of the main results of this article. \\

\noindent \textbf{Proof of Theorem \ref{Main}.} Let $T \in C_{1, r}$. Then $(T, rT^{-1})$ is a commuting pair of contractions acting on $\mathcal{H}$. It follows from Ando's dilation, Theorem \ref{Ando}, that there are commuting unitaries $U_1, rU_2^{-1}$ on a Hilbert space $\mathcal{K}_0\supseteq \mathcal{H}$ such that	 
\begin{equation} \label{eqn:001}
	 p(T, rT^{-1})h=P_\mathcal{H}p(U_1, rU_2^{-1})h
\end{equation}
for every $h \in \mathcal{H}$ and for every polynomial $p \in \C[z_1,z_2]$. Consequently, we have that 
\begin{equation} \label{eqn:002}
T^jh  =P_\mathcal{H}U_1^jh \quad \mbox{and} \quad \ T^{-j}h        =P_\mathcal{H}U_2^{-j}h,  \quad \mbox{for} \ h \in \mathcal{H} \ \text{and} \ j=0, 1, 2, \dotsc \,. 
\end{equation}
Let $f$ be a rational function with poles off $\overline{A}_r$. Then, $f(z)=p(z)q_1(z)^{-1}q_2(z)^{-1}$ as in (\ref{eq:1e}), where $q_1, q_2$ have their zeros in $\C \setminus \ov{\D}$ and $r\D$ respectively. For any $h \in \HS$, we have from Lemma $\ref{II_Laurent}$ that
\begin{equation*}
	\begin{split}
f(T)h&=p(T)\bigg(\overset{\infty}{\underset{n=0}{\sum}}q_{n1}T^n\bigg) \bigg(\overset{\infty}{\underset{n=0}{\sum}}q_{n2}T^{-n}\bigg)h\\
&=\lim_{m \to \infty} \bigg[ p(T)\bigg(\overset{m}{\underset{n=0}{\sum}}q_{n1}T^n\bigg) \bigg(\overset{m}{\underset{n=0}{\sum}}q_{n2}T^{-n}\bigg)\bigg]h \\
&=\lim_{m \to \infty} \bigg[ P_\mathcal{H}p(U_1)\bigg(\overset{m}{\underset{n=0}{\sum}}q_{n1}U_1^n\bigg) \bigg(\overset{m}{\underset{n=0}{\sum}}q_{n2}U_2^{-n}\bigg)\bigg]h  \quad [\text{by } (\ref{eqn:001}) \; \& \; (\ref{eqn:002})]\\
&=P_\mathcal{H}p(U_1)\bigg(\overset{\infty}{\underset{n=0}{\sum}}q_{n1}U_1^n\bigg) \bigg(\overset{\infty}{\underset{n=0}{\sum}}q_{n2}U_2^{-n}\bigg)h\\
&=P_\mathcal{H}p(U_1)q_1(U_1)^{-1}q_2(U_2)^{-1}h.\\
	\end{split}
\end{equation*}
Set
\[
 N=	\begin{bmatrix}
	         U_1 & 0\\
	         0 & U_2\\ 		
            \end{bmatrix} \ \mbox{and} \ \
      F=	\begin{bmatrix}
             0 & I_{\mathcal{K}_0}\\
             I_{\mathcal{K}_0} & 0\\ 		
            \end{bmatrix} \quad \text{ on } \; \mathcal{K}=\mathcal{K}_0 \oplus \mathcal{K}_0.
\]
Note that $F$ is a self-adjoint unitary. Since $U_1$ and $r^{-1}U_2$ are unitaries on $\mathcal{K}_0$, it follows from Theorem $\ref{Ar_unitary}$ that $N$ is an $A_r$-unitary. We have that $\HS \subseteq \mathcal K_0$. Let $V: \mathcal{H} \to \mathcal{K}$ be defined as $Vh=(h, 0)$. Evidently, $V$ is an isometric embedding and $V^*(x_1, x_2)=P_\mathcal{H}x_1$ for every $(x_1, x_2) \in \mathcal{K}$.
So, for any $h\in \HS$ we have that 
\begin{equation*}
	\begin{split}
		V^*\bigg(p(N)q_1(N)^{-1}Fq_2(N)^{-1}F\bigg)Vh&=
		V^*\bigg(p(N)q_1(N)^{-1}Fq_2(N)^{-1}\begin{bmatrix}
			0 & I_{\mathcal{K}_0}\\
			I_{\mathcal{K}_0} & 0\\ 		
		\end{bmatrix}\bigg) \begin{bmatrix}
		h \\
		0 \\ 		
	\end{bmatrix}\\
&=V^*\bigg(p(N)q_1(N)^{-1}F\begin{bmatrix}
	q_2(U_1)^{-1} & 0\\
	0 & q_2(U_2)^{-1}\\ 		
\end{bmatrix}\bigg) \begin{bmatrix}
	0 \\
	h \\ 		
\end{bmatrix}\\
&=V^*\bigg(p(N)q_1(N)^{-1}\begin{bmatrix}
	 0 & I_{\mathcal{K}_0}\\
	I_{\mathcal{K}_0} & 0\\ 		
\end{bmatrix}\bigg) \begin{bmatrix}
	0 \\
	q_2(U_2)^{-1}h \\ 		
\end{bmatrix}\\
&=V^*\begin{bmatrix}
	p(U_1)q_1(U_1)^{-1} & 0\\
	0 & p(U_2)q_1(U_2)^{-1}\\ 		
\end{bmatrix}  \begin{bmatrix}
	q_2(U_2)^{-1}h \\ 		
	0\\
\end{bmatrix}\\
&=V^*\begin{bmatrix}
	p(U_1)q_1(U_1)^{-1}q_2(U_2)^{-1}h \\ 		
	0\\
\end{bmatrix}\\
&=P_\mathcal{H}p(U_1)q_1(U_1)^{-1}q_2(U_2)^{-1}h\\
&=f(T)h.
	\end{split}
\end{equation*}
Also, we have
\[
FNF=\begin{bmatrix}
             0 & I_{\mathcal{K}_0}\\
             I_{\mathcal{K}_0} & 0\\ 		
            \end{bmatrix}
            \begin{bmatrix}
	         U_1 & 0\\
	         0 & U_2\\ 		
            \end{bmatrix}
            \begin{bmatrix}
             0 & I_{\mathcal{K}_0}\\
             I_{\mathcal{K}_0} & 0\\ 		
            \end{bmatrix}
            = \begin{bmatrix}
	         U_2 & 0\\
	         0 & U_1\\ 		
            \end{bmatrix}
\]
and hence $q_2(FNF)=Fq_2(N)F$. Note that $FNF$ is unitarily equivalent to $N$ and hence is an $\A$-unitary. Since $F$ is a self-adjoint unitary, we have that $q_2(FNF)^{-1}=Fq_2(N)^{-1}F$.
Putting everything together, we have that 
\[
f(T)h=V^*\bigg(p(N)q_1(N)^{-1}q_2(FNF)^{-1}\bigg)Vh
\]
for any rational function $f$ with poles off $\ov{A}_r$ and for every $h \in \mathcal{H}$. To see the converse,  assume that there is a Hilbert space $\mathcal{K} \supseteq \mathcal H$, an $A_r$-unitary $N$ on $\mathcal K$ and a self adjoint unitary $F $ on $\mathcal K$ such that 
 \[
 f(T)=P_\mathcal H \bigg(p(N)q_1(N)^{-1}q_2(FNF)^{-1}\bigg) \bigg|_\mathcal H
 \]
 for every rational function $f$ with poles off $\overline{A}_r$ $(\mbox{as in} \ (\ref{eq:1e}))$. Then $T=P_\mathcal{H} N|_\mathcal{H}$ and $T^{-1}=P_\mathcal{H}FN^{-1}F|_\mathcal{H}$. Since $N$ is an $A_r$-unitary, it follows from Theorem \ref{thm_3class} that $N \in C_{1, r}$. Thus $\|T\| \leq \|N\| \leq 1$ and $\|T^{-1}\| \leq \|FN^{-1}F\| \leq \|N^{-1}\| \leq r^{-1}$. Consequently, $T \in C_{1, r}$. The proof is now complete.
\qed

\begin{rem}
 
In Theorem \ref{Main}, if we denote the $\A$-unitary $FNF$ by $\widetilde{N}$, then it follows as a special case that for every $\A$-contraction $T \in \mathcal B(\HS)$, there is a Hilbert space $\mathcal K \supseteq \HS$ and an $\A$-unitary $\widetilde{N} \in \mathcal B(\mathcal K)$ such that
\[
f(T)=P_{\HS}f(\widetilde{N})|_{\HS}
\]
for every rational function of the form $f=1/q$ such that the zeros of $q$ lie inside $r\D$. Also, if $g=p/q_1$ with $q_1$ having its zeros inside $\C \setminus \ov{\D}$, then also we have
\[
g(T)= P_{\HS}g({N})|_{\HS},
\]
for some $\A$-unitary $N \in \mathcal B(\mathcal K_1)$ such that $\mathcal K_1 \supseteq \HS $.
\end{rem}

We conclude this Section by providing Theorem \ref{thm_C1r_pencil}, another main theorem of this paper that characterizes an operator in $C_{1, r}$ in different ways. Once again we mention that the equivalence of parts $(1)$ \& $(2)$ of this theorem follows from Proposition 3.2 in \cite{Pas-McCull}. However, we present here a different proof for this part also.\\

\noindent \textbf{Proof of Theorem \ref{thm_C1r_pencil}.} $(1) \implies (2)$. Let $T \in C_{1, r}$. It is easy to see that 
\begin{equation}\label{eqn_3.3}
D_T^2D_{(rT^{-1})^*}^2=D_{(rT^{-1})^*}^2D_T^2=(1+r^2)I_\HS-T^*T-r^2T^{-1}(T^{-1})^*.
\end{equation}
Consequently, $p(D_T^2)p(D_{(rT^{-1})^*}^2)=p(D_{(rT^{-1})^*}^2)p(D_T^2)$ for every polynomial $p\in \C[z]$. Choose a sequence of polynomials $p_n(x)$ that converges uniformly to $x^{1\slash 2}$ on the interval $0 \leq x \leq 1$. It follows from the spectral theorem that the sequence of operators $p_n(B)$ converges to $B^{1\slash 2}$ for any positive operator $B$ such that $0 \leq B \leq I_\HS$.  Applying (\ref{eqn_3.3}) to these polynomials and taking the limit as $n \to \infty$, we have
\[
D_TD_{(rT^{-1})^*}=D_{(rT^{-1})^*}D_T.
\]
Thus, for any $h \in \HS$, we have that
\[
\left\langle\left((1+r^2)I_\HS-T^*T-r^2T^{-1}(T^{-1})^*\right)h, h  \right\rangle =\left\langle D_T^2D_{(rT^{-1})^*}^2h, h  \right\rangle =\|D_TD_{(rT^{-1})^*}h\|^2 \geq 0.
\] 

	\medskip
	
\noindent $(2) \implies (3).$ Let $\Delta_T=(1+r^2)I_\HS-T^*T-r^2T^{-1}(T^{-1})^*\geq 0$ and let $P=(T^*T)^{1\slash 2}$. Note that $P$ is invertible as $T$ is invertible. Moreover, we have
\begin{equation*}
0 \leq P^*\Delta_T P=P((1+r^2)I_\HS-P^2-r^2P^{-2})P=(I_\HS-P^2)(P^2-r^2I_\HS)=\alpha(P^*, P).
\end{equation*}

\medskip

\noindent $(3) \implies (4).$ Let $P=(T^*T)^{1\slash 2} \in C_\alpha$. Then $\alpha(P^*, P)=(I_\HS-P^2)(P^2-r^2I_\HS) \geq 0$. Let $\lambda \in \sigma(P)$. It follows from the spectral theorem that $(1-\lambda^2)(\lambda^2-r^2) \geq 0$ and this holds if and only if $r \leq \lambda \leq 1$. Therefore, $\sigma(P) \subseteq \overline{A}_r$. Consequently, we have by Lemma \ref{lem_normal_Ar} that $P$ is an $\A$-contraction.

	\medskip
	
\noindent $(4) \implies (5).$ Let $P=(T^*T)^{1\slash 2}$ be an $\A$-contraction. For $U=TP^{-1}$, we have that
\[
U^*U=P^{-1}T^*TP^{-1}=P^{-1}P^2P^{-1}=I_\HS \quad \text{and} \quad UU^*=TP^{-2}T^*=T(T^*T)^{-1}T^*=I_\HS.
\]
Hence, $U$ is a unitary on $\HS$ and $T=UP$. 
	
	\medskip
	
\noindent $(5) \implies (1).$ Let $T=UP$ for a unitary $U$ and an $\A$-contraction $P$ on $\HS$. Then $rT^{-1}=(rP^{-1})U^*$. Consequently, we have that $\|T\| \leq \|P\| \leq 1$ and $\|rT^{-1}\| \leq \|rP^{-1}\| \leq 1$. The proof is now complete.
\qed

\vspace{0.2cm}
 
 \section{The quantum annulus}\label{Sec_quant}

\vspace{0.3cm}
 
\noindent In this Section, we provide a model for operators in the quantum annulus  $\mathbb{Q} \mathbb{A}_r$. Recall that 
\[
\mathbb{Q} \mathbb{A}_r=\{T: T \ \mbox{is invertible and} \ \|rT\|, \|rT^{-1}\| \leq 1\},
\] 
which is the quantization of the closed annulus $\overline{\mathbb{A}}_r=\{z \in \C: z \ne 0 \ \mbox{and} \ |rz|, |rz^{-1}|\leq 1\}$ in the sense that the scalars in the annulus are replaced by operators with similar norm-bounds. With this terminology, $C_{1, r}$ is nothing but the quantization of $\overline{A}_r$. It is evident that $C_{1,r}$ is a proper subset of  $\mathbb{Q} \mathbb{A}_r$. However, Lemma \ref{lem_equiv} shows that these two classes are actually equivalent. Thus, in light of Theorem \ref{Main}, we have an analogous model for $\mathbb Q \mathbb A_r$ in Theorem \ref{thm_404} whose proof is given below.\\

\noindent \textbf{Proof of Theorem \ref{thm_404}.}
The converse is straightforward. We assume that $T \in \mathbb{Q} \mathbb{A}_r$. We have by Lemma \ref{lem_equiv} that $rT \in C_{1, r^2}$. It follows from Theorem \ref{Main} that there is a Hilbert space $\mathcal{K} \supseteq \mathcal H$, an $A_{r^2}$-unitary $\tilde{N}$ on $\mathcal K$ and a self adjoint unitary $F $ on $\mathcal K$ such that 
 \begin{equation}\label{eqn_401}
f(rT)=P_\mathcal H \bigg(f_0(\tilde{N})f_1(\tilde{N})^{-1}f_2(F\tilde{N}F)^{-1}\bigg) \bigg|_\mathcal H
 \end{equation}
for every rational function $f=f_0\slash f_1f_2$ with zeros of $f_1$ and $f_2$ lying in $\C \setminus \overline{\D}$ and $r^2\D$ respectively. Let $g=p\slash q_1q_2$ with zeros of $q_1$ and $q_2$ in $\C \setminus r^{-1} \overline{\D}$ and $r\D$ respectively. We define
\[
f(z)=g(r^{-1}z)=\frac{p(r^{-1}z)}{q_1(r^{-1}z)q_2(r^{-1}z)},
\]
which is holomorphic on $\overline{A}_{r^2}$ with zeros of $q_1(r^{-1}z)$ and $q_2(r^{-1}z)$ lying in $\C \setminus \overline{\D}$ and $r^2\D$ respectively. We have by $(\ref{eqn_401})$ that 
 \[
 g(T)=f(rT)=P_\mathcal H \bigg(p(r^{-1}\tilde{N})q_1(r^{-1}\tilde{N})^{-1}q_2(r^{-1}F\tilde{N}F)^{-1}\bigg) \bigg|_\mathcal H.
 \]
Let $N=r^{-1} \tilde{N}$. Then $N$ is a normal operator and we have that
\[
\sigma(N)=\{r^{-1}\lambda : \lambda \in \sigma(\tilde{N})\}\subseteq \{r^{-1}\lambda : |\lambda|=1 \ \mbox{or} \ |\lambda|=r^2\}=\partial \overline{\mathbb{A}}_r.
\] 
The proof is now complete.
\qed 

Since we have an equivalence of the two classes $C_{1,r}$ and $\bA$ by Lemma \ref{lem_equiv}, we have a model in Theorem \ref{thm_C1r_MP} for $C_{1,r}$ analogous to the model for $\bA$ due to McCullough and Pascoe. We present a brief proof to this below.\\

\noindent \textbf{Proof of Theorem \ref{thm_C1r_MP}.}
The converse is trivial. Let us assume that $T \in C_{1, r}$. By Lemma \ref{lem_equiv}, $r^{-1\slash 2} T \in \mathbb{Q} \mathbb{A}_{\sqrt{r}}$. It follows from Theorem \ref{thm_model_QAr} that there is a unitary $U$ on some larger Hilbert space $\mathcal{K} \supseteq \HS$, an invertible operator $\tilde{J}$ on $\mathcal{K}$ such that 
$(r^{-1\slash 2}T)^n=P_\mathcal{H}\tilde{J}^n|_\mathcal{H}$ for all $n \in \mathbb{Z}$ and that
\[
\tilde{J}=U\begin{bmatrix}
r^{1\slash 2}I_{\mathcal{K}_{0}} & 0\\
0 & r^{-1\slash 2}I_{\mathcal{K}_{1}}
\end{bmatrix}\,,
\]
where $U$ is a unitary and $\mathcal{K}=\mathcal{K}_0 \oplus \mathcal{K}_1$. Take $J=r^{1\slash 2}\tilde{J}$ and the desired conclusion follows.
\qed 

\bigskip

Theorem \ref{thm_C1r_MP} gives a model for an operator in the $C_{1,r}$ class. Therefore, the model operator $J$ as in Theorem \ref{thm_C1r_MP} cannot be an $\A$-unitary. The reason is obvious; an $\A$-unitary can provide a model for the $\A$-contractions and the $C_{1,r}$ class is strictly bigger than that of the $\A$-contractions. So, a natural question arises: when the model operator $J$ becomes an $\A$-unitary so that the initial operator $T \in C_{1,r}$ becomes an $\A$-contraction ? Below we provide a necessary and sufficient condition for the same. 

\begin{prop}\label{prop_J_normal}
Let $J$ acting on a Hilbert space $\mathcal{K}$ be as in Theorem \ref{thm_C1r_MP}. Then the following are equivalent:
\begin{enumerate}
\item $J$ is an $\A$-unitary ;
\item $J$ is normal ;
\item $UJ_r=J_rU$ where $
J_r=\begin{bmatrix}
rI_{\mathcal{K}_0} & 0 \\
0 & I_{\mathcal{K}_1}
\end{bmatrix}$.
\end{enumerate}
\end{prop} 
 
\begin{proof}

$(1) \implies (2)$ follows trivially. We shall prove $(2) \implies (3) \implies (1)$.

\medskip 

\noindent $(2) \implies (3)$. Let $J=UJ_r$ be normal. Then $J^*J=JJ^*$ implies that $UJ_r^2=J_r^2U$. Since $J_r$ is self-adjoint, one can prove that $UJ_r=J_rU$ by an application of spectral theorem as discussed in the proof of Theorem \ref{thm_C1r_pencil}.

\medskip 

\noindent $(3) \implies (1)$. Let $UJ_r=J_rU$ and let $U=\begin{bmatrix}
U_{11} & U_{12} \\
U_{21} & U_{22} 
\end{bmatrix}$  with respect to $\mathcal{K}=\mathcal{K}_0 \oplus \mathcal{K}_1$. Then 
\[
0=UJ_r-J_rU=\begin{bmatrix}
rU_{11} & U_{12} \\
rU_{21} & U_{22} 
\end{bmatrix}-\begin{bmatrix}
rU_{11} & rU_{12} \\
U_{21} & U_{22} 
\end{bmatrix}=\begin{bmatrix}
0 & (1-r)U_{12} \\
(r-1)U_{21} & 0 
\end{bmatrix}\,,
\]
which is possible if and only if $U_{12}=U_{21}=0$ as $r<1$. Hence, $U_{11}$ and $U_{22}$ are unitaries on $\mathcal{K}_0$ and $\mathcal{K}_1$ respectively. Furthermore, we have that 
\[
J=\begin{bmatrix}
rU_{11} & 0 \\
0 & U_{22} 
\end{bmatrix}.
\]
It follows from Theorem \ref{Ar_unitary} that $J$ is an $\A$-unitary which completes the proof.
\end{proof}

\noindent \textbf{Proof of Theorem \ref{thm_QAr_pencil}.} Follows directly from Theorem \ref{thm_C1r_pencil} and Lemma \ref{lem_equiv}. 
\qed

\vspace{0.3cm}

\noindent \textbf{Concluding remark.} The model operators by Bello-Yakubovich \cite{Dmitry} for $C_{\alpha}$ class or by McCullough-Pascoe \cite{Pas-McCull} for $\mathbb Q \mathbb A_r$ cannot be normal or subnormal operators as a subnormal operator in $C_{1,r}$ is an $\A$-contraction by Proposition \ref{prop:new-021} and a subnormal operator in $\mathbb Q \mathbb A_r$ has $\ov{\mathbb A}_r$ as a spectral set by Proposition \ref{prop:new-022}. The chain $\mathcal A_r \subsetneq C_{\alpha} \subsetneq C_{1,r}$ clearly shows that an operator in $\mathcal A_r$ cannot be a model for $C_{1,r}$. Also Example \ref{ExampleII} confirms that the operators having $\ov{\mathbb A}_r$ as a spectral set is properly contained in $\mathbb Q \mathbb A_r$ and thus cannot be a model for the $\mathbb Q \mathbb A_r$. However, our models for both $C_{1,r}$ and $\mathbb Q \mathbb A_r$ consist of a pair of normal operators having their spectrums on the boundary of the annuli $\A$ and $\mathbb A_r$ respectively.

\vspace{0.2cm}

\end{document}